\documentclass[a4paper,12pt]{article}
\usepackage[utf8]{inputenc} 
\usepackage{amsmath}
\usepackage{amsthm}
\usepackage{amssymb}
\usepackage{mathrsfs}

\usepackage[icelandic,english]{babel}
\usepackage{t1enc}
\usepackage[colorlinks=false,
pdfauthor={Benedikt Steinar Magnusson and Erlend Fornaess Wold},
pdftitle={Continuous Carleman approximations
}]{hyperref}



\newcommand{\C}{{\mathbb  C}}
\newcommand{\R}{{\mathbb  R}}

\renewcommand{\phi}{\varphi}
\renewcommand{\epsilon}{\varepsilon}

\newtheorem{theorem+}           {Theorem}      [section]
\newtheorem{definition+}  [theorem+]  {Definition}
\newtheorem{lemma+}  [theorem+]  {Lemma}
\newtheorem{corollary+}  [theorem+]  {Corollary}
\newtheorem{proposition+}  [theorem+]  {Proposition}
\newtheorem{example+}  [theorem+]  {Example}
\newtheorem{question+}  [theorem+]  {Question}

\newenvironment{theorem}{\begin{theorem+}\sl}{\end{theorem+}\rm}
\newenvironment{definition}{\begin{definition+}\rm}{\end{definition+}\rm}
\newenvironment{lemma}{\begin{lemma+}\sl}{\end{lemma+}\rm}
\newenvironment{corollary}{\begin{corollary+}\sl}{\end{corollary+}\rm}

\title{A characterization of totally real Carleman continua and an application 
to products of stratified totally real sets.  
}
\author{Benedikt Steinar Magnusson and Erlend Forn{\ae}ss Wold}
\date{\today}

\begin{document}
\maketitle
\begin{abstract}
We give a characterization of stratified totally real sets that admit Carleman 
approximation by entire functions.  As an application we
show that the product of two stratified totally real Carleman continua is a Carleman continuum.  
\end{abstract}


\section{Introduction}

\begin{theorem}\label{thm4}
Let $M\subset\mathbb C^n$ be a closed stratified totally real set.  Then $M$ 
is a Carleman continuum if and only if $M$ has bounded E-hulls. 
\end{theorem}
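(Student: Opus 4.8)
The plan is to prove the two implications separately: necessity is elementary, while sufficiency carries the weight of the theorem and is where the hull hypothesis really enters.

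For necessity I would argue by contradiction. Suppose $M$ is a Carleman continuum but the bounded-E-hulls property fails. Then there is a compact set $K\subseteq M$ and a sequence $p_j\in M$ with $|p_j|\to\infty$ that remains trapped in the E-hull generated by $K$ (together, if the definition demands it, with finitely many of the earlier $p_i$), so that $|h(p_j)|\le\sup_K|h|$ for every $h\in\O(\Cn)$; after passing to a subsequence we may assume $\{p_j\}$ is closed and discrete. By the Tietze extension theorem pick $g\in\mathcal C(M,\C)$ with $g\equiv 0$ on $K$ and $g(p_j)=j$, and a positive continuous $\epsilon$ on $M$ with $\epsilon<1$ on $K\cup\{p_j\}$. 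Any entire $h$ with $|h-g|<\epsilon$ on $M$ would satisfy $\sup_K|h|<1$, hence $|h(p_j)|<1$, contradicting $|h(p_j)|>j-1$. (If the definition of bounded E-hulls also encodes that every compact subset of $M$ is polynomially convex, this comes for free: Carleman approximation and Tietze extension together force $\O(\Cn)|_K$ to be dense in $\mathcal C(K)$ for each compact $K\subseteq M$, and a compact set whose uniform algebra of entire functions is all of $\mathcal C(K)$ has trivial Gelfand spectrum, hence equals its polynomial hull.)

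For sufficiency I would run the classical exhaustion-and-telescoping proof of Carleman approximation, with the bounded-E-hulls hypothesis supplying exactly the input that keeps the scheme from breaking at infinity. First, using the hypothesis, construct an exhaustion $\emptyset=M_0\subseteq M_1\subseteq M_2\subseteq\cdots$ of $M$ by compact sets with $\bigcup_j M_j=M$, each $M_j$ E-convex (so that $\O(\Cn)$ is dense, restricted to $M_j$, in functions holomorphic near $M_j$), $M_j$ in the relative interior of $M_{j+1}$, and the consecutive pairs $(M_j,M_{j+1})$ ``Runge'' in the strong sense needed below, the shell $\overline{M_{j+1}\setminus M_j}$ being E-separated from $M_{j-1}$. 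It is the boundedness of the E-hulls that guarantees such a tower of E-convex sets exists all the way out. Second, given a target $f\in\mathcal C(M,\C)$ and a positive continuous $\epsilon$ (which we may take constant on each shell and decreasing), build entire $g_j$ by induction: with $g_j$ already suitably close to $f$ on $M_j$, approximate on $M_{j+1}$, by an entire function, the continuous function obtained by interpolating between $g_j$ near $M_{j-1}$ and $f$ on the outer shell, and correct $g_j$ by an entire function negligible on $M_{j-1}$ — whose existence is precisely what the E-separation in the exhaustion provides. The approximation of continuous functions on the compact stratified totally real sets $M_{j+1}$ is itself obtained by inducting over the strata, lowest to highest dimension, the base case being the classical totally real approximation theorem. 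Finally $g:=\lim_j g_j$ exists locally uniformly, is entire, and $|g-f|<\epsilon$ on each $M_k$, hence on all of $M$.

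The crux is the inductive step of the telescoping. One must, on the compact stratified totally real set $M_{j+1}$, approximate a target that is already close to an entire function on a neighbourhood of $M_{j-1}$ but only continuous further out, and do so with an error on $M_{j-1}$ that is much smaller than the error permitted on $M_{j+1}$; this requires both the stratified compact approximation theorem in a form handling mixed holomorphic/continuous data and enough control of polynomial hulls to manufacture the negligible-on-$M_{j-1}$ correction. Converting the single global hypothesis ``$M$ has bounded E-hulls'' into the per-step E-convexity and E-separation statements, uniformly in $j$ and compatibly with the stratification — whose strata may be noncompact, so that the exhaustion must be chosen to meet them well — is the technical heart of the argument.
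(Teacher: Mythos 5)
Your necessity argument misreads what the failure of bounded E-hulls actually provides, and this is fatal to that half of the proof. First, the definition requires $h(K\cup M)$ to be bounded for \emph{every} compact $K\subset\C^n$, not only for $K\subseteq M$; restricting to $K\subseteq M$ merely tests whether $h(M)$ itself is bounded, a much weaker condition. Second, and more seriously, when the property fails the unbounded points one obtains lie in $h(K\cup M)=\widehat{K\cup M}\setminus(K\cup M)$, hence \emph{off} $M$ (and off $K$), whereas your contradiction is built on a sequence $p_j\in M$ at which you prescribe values by Tietze extension and then invoke Carleman approximation on $M$. At the genuine hull points, Carleman approximation on $M$ gives you no control whatsoever. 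The whole content of this implication in the paper (Theorem \ref{thm1} together with Lemma \ref{lemma:approx}) is to produce an entire function $q$ with $|q|<1/2$ on $M$ but $q(x_j)=j$ on a discrete set $E=\{x_j\}\subset h(K\cup M)$ disjoint from $M$; this is achieved by proving that $M\cup E$ is again a Carleman continuum with interpolation on $E$, using the Rosay--Rudin theorem to realize $E$ as a common zero set $Z(f_1,\dots,f_n)$ and to interpolate on $E$, and then using the Carleman property of $M$ to build $\phi=\sum\tilde g_jf_j$ vanishing on $E$ and nonvanishing on $M$. Nothing in your proposal supplies such a function, so the necessity direction does not go through as written.

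For the sufficiency direction the paper does not give a new proof: for (stratified) totally real sets, bounded E-hulls implies Carleman approximation is quoted from Manne--{\O}vrelid--Wold, and the paper's Theorem \ref{thm3} carries out exactly the exhaustion-and-telescoping scheme you outline, with the compact inductive step supplied by Samuelsson--Wold (approximation of $\mathcal C(K\cup M)\cap\mathcal O(K)$ functions on compact stratified totally real sets, Lemma \ref{inductivestep}), and with bounded E-hulls used to choose a normal exhaustion $K_j$ of $\C^n$ with $K_j\cup M$ polynomially convex. Your sketch is in the right spirit, but it leaves precisely these two nontrivial ingredients as unproved assertions; either cite them explicitly or accept that this half, as you present it, is only an outline. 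The part of the theorem that is new, and that your proposal is missing, is the interpolation device at hull points off $M$ described above.
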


(For the definition of Carleman approximation and bounded E-hulls, see Section 2, and 
for the definition of a stratified totally real set, see Section 3.)

It is already known \cite{ManOvrWol:2011} that in the totally real setting, the property of bounded E-hulls 
implies Carleman approximation.  The remaining result is therefore the following, which does 
not rely on the notion of being totally real:

\begin{theorem}\label{thm1}
 If $M\subset \C^n$ is a closed set which admits 
Carleman approximation by entire functions, then $M$ has bounded E-hulls.
\end{theorem}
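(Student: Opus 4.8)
The plan is to argue by contraposition, exploiting the flexibility of Carleman approximation to manufacture a single entire function that is simultaneously forced to be \emph{small} on a fixed compact piece of $M$ and \emph{large} at points captured by the E-hull of that piece. So assume $M$ admits Carleman approximation by entire functions, and suppose toward a contradiction that $M$ does not have bounded E-hulls. By the definition in Section~2 this failure is concentrated at a single scale: there is a fixed radius $R_0>0$ such that the E-hull of $M_{R_0}:=M\cap\overline{\B}_{R_0}$ is unbounded, which yields a sequence of points $p_j\in M$ with $\|p_j\|\to\infty$ lying in that E-hull. The content of membership in the E-hull is the family of inequalities
\begin{equation*}
|g(p_j)|\ \le\ \sup_{M_{R_0}}|g|\qquad\text{for every entire }g\text{ that is bounded on }M. \tag{$\star$}
\end{equation*}
It is essential here that the escaping points lie on $M$ (this is exactly what the E-hull notion records): that is where the approximation will have leverage. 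Passing to a subsequence, I may assume the $p_j$ are pairwise distinct, form a closed discrete subset of $M$, and all lie outside $\overline{\B}_{R_0+1}$.

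Next I build an admissible target realizing the obstruction. Let $f\colon M\to\C$ be continuous with $f\equiv 0$ on $M\cap\overline{\B}_{R_0+1}$, with $|f|\le 1$ on $M$, and with $f(p_j)=1$ for all $j$; such an $f$ exists since $M\cap\overline{\B}_{R_0+1}$ and the discrete closed set $\{p_j\}$ are disjoint. Applying Carleman approximation to $f$ with the constant tolerance $\epsilon\equiv\tfrac14$, I obtain an entire $g$ with $|g-f|<\tfrac14$ throughout $M$. Three consequences are immediate: on $M_{R_0}$ we get $|g|\le|f|+\tfrac14=\tfrac14$, hence $\sup_{M_{R_0}}|g|\le\tfrac14$; at each $p_j$ we get $|g(p_j)|\ge|f(p_j)|-\tfrac14=\tfrac34$; and $g$ is bounded on $M$ (by $\tfrac54$), so $g$ is exactly the sort of test function to which $(\star)$ applies. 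Combining these, $\tfrac34\le|g(p_j)|\le\sup_{M_{R_0}}|g|\le\tfrac14$, which is absurd. Therefore $M$ has bounded E-hulls, and together with the previously known converse this completes the characterization of Theorem~\ref{thm4}.

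The step I expect to require the most care is the admissibility of the target $f$. If Carleman approximation is assumed only for continuous functions on $M$, the construction above is immediate. If instead the class is $f$ continuous on $M$ and holomorphic on $\operatorname{int}M$, one must guarantee that $M_{R_0}$ and the points $p_j$ are not linked through a single component of $\operatorname{int}M$ that would force the vanishing of $f$ near $M_{R_0}$ to propagate out to the $p_j$; this can be arranged by passing to a further subsequence of the $p_j$ and taking $f$ locally constant near $M_{R_0}$ and near each $p_j$. The second point to keep in view is that the Carleman approximant $g$ must genuinely qualify as a test function for the E-hull, i.e. be bounded on $M$: this is the sole reason for insisting $|f|\le 1$, since then $\|g\|_M\le\tfrac54<\infty$ and the inequality $(\star)$ is available. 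With both points in hand, the numerical contradiction closes the argument.
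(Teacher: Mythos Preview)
Your argument rests on a misreading of the definition of bounded E-hulls in Section~2. The condition is: for every compact $K\subset\mathbb C^n$ the set $h(K\cup M)=\widehat{K\cup M}\setminus(K\cup M)$ is bounded, where $\widehat{\;\cdot\;}$ is the polynomial hull. Here $K$ is an arbitrary compact subset of $\mathbb C^n$, not of $M$, and by construction $h(K\cup M)$ is \emph{disjoint} from $M$. Thus when the E-hulls are unbounded one obtains a discrete sequence $\{p_j\}$ in $\mathbb C^n\setminus M$, not in $M$. Your assertion that ``the escaping points lie on $M$ (this is exactly what the E-hull notion records)'' is the opposite of what the definition says, and the inequality you label $(\star)$ is not the one membership in the hull provides: the correct bound is $|g(p_j)|\le\sup_{K\cup M}|g|$ for every entire $g$ (no boundedness hypothesis on $g|_M$ is needed, since this follows from the polynomial-hull inequality via uniform polynomial approximation on compacts).

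Once the $p_j$ lie off $M$, your construction collapses: Carleman approximation on $M$ controls the approximating entire function only on $M$, so you have no mechanism to force $|g(p_j)|$ to be large. This is precisely the obstacle the paper's Lemma~\ref{lemma:approx} is designed to remove: it shows that if $M$ is Carleman and $E=\{p_j\}$ is discrete in $\mathbb C^n\setminus M$, then $M\cup E$ is again Carleman, with \emph{interpolation} on $E$. With that lemma one produces an entire $q$ with $|q|<\tfrac12$ on $M$ and $q(p_j)=j$, and the contradiction comes by comparing $|q(p_j)|=j$ with $\max(\|q\|_K,\tfrac12)$ for $j$ large. Your numerical endgame is in the right spirit, but the missing ingredient---how to prescribe values at points outside $M$ starting only from the Carleman property of $M$---is the real content of the proof.
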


Theorem \ref{thm1} generalizes the main theorem of \cite{ManOvrWol:2011} where the implication 
was shown under the assumption that $M$ is totally real and admits $\mathcal C^1$-Carleman approximation.   
As an application of this theorem we prove the following partial answer to a question raised by E. L. Stout (private communication):

\begin{theorem}\label{thm2}
Let $M_j\subset\mathbb C^{n_j}$ be stratified totally real sets for $j=1,2$ which 
admit Carleman approximation by entire functions.  Then $M_1\times M_2\subset\mathbb C^{n_1}\times\mathbb C^{n_2}$ 
admits Carleman approximation by entire functions.  
\end{theorem}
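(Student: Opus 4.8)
\noindent\emph{Proof proposal.}
The plan is to route the whole argument through the equivalence in Theorem~\ref{thm4}. Since each $M_j$ admits Carleman approximation by entire functions, Theorem~\ref{thm1} shows that each $M_j$ has bounded E-hulls. It then suffices to verify two things about $M_1\times M_2$: that it is again a closed stratified totally real set, and that it has bounded E-hulls. Granting both, Theorem~\ref{thm4} applied to $M=M_1\times M_2$ gives at once that $M_1\times M_2$ is a Carleman continuum, i.e.\ admits Carleman approximation by entire functions.

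That $M_1\times M_2$ is a closed stratified totally real set is the easy part, and I would handle it directly from the definition. If $M_j$ carries the filtration $M_j=F_j^0\supset F_j^1\supset\cdots$ with totally real strata $F_j^i\setminus F_j^{i+1}$, then each product $(F_1^i\setminus F_1^{i+1})\times(F_2^l\setminus F_2^{l+1})$ is a totally real submanifold of $\C^{n_1}\times\C^{n_2}$: a product tangent space splits as $T\times T'$, $J$ acts blockwise, and $(T\times T')\cap J(T\times T')=(T\cap JT)\times(T'\cap JT')=0$. Grouping these product strata according to the value of $i+l$ produces a filtration of $M_1\times M_2$ by closed sets whose successive differences are totally real submanifolds, the closure relations being inherited from those of the two factors. (Cf.\ the definitions in Section~3.)

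The substantive step, and the one I expect to be the main obstacle, is that the product of two closed sets with bounded E-hulls again has bounded E-hulls. The mechanism I would use is the behaviour of hulls under projections and products. Write $\pi_1,\pi_2$ for the two coordinate projections of $\C^{n_1}\times\C^{n_2}$. Cutting $M_1\times M_2$ down by a product exhaustion gives a compact set contained in a product $A\times B$, where $A$ and $B$ are compact pieces of $M_1$ and $M_2$; pulling entire functions on $\C^{n_1}$ and on $\C^{n_2}$ back through $\pi_1$ and $\pi_2$ shows that the $\O(\C^{n_1}\times\C^{n_2})$-hull of $K\cup(A\times B)$ is contained in $\widehat{\pi_1(K)\cup A}\times\widehat{\pi_2(K)\cup B}$; and Oka's theorem that the hull of a product of compacta is the product of the hulls lets one separate the two coordinate factors. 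Inserting the bounded-E-hull property of $M_1$ and of $M_2$ then bounds each factor, and hence the product. The delicacy is that the E-hull of $M_1\times M_2$ is attached to a noncompact set, so the argument must be run uniformly along the exhaustion (so that Oka's product theorem is applied only to honest compacta), and the ``mixed'' terms — a fixed compact hull $\widehat{\pi_j(K)}$ times a bounded exhaustion level of the other factor — must be absorbed with precisely the dependence of constants that the definition in Section~2 requires. Once this is established, Theorem~\ref{thm4} finishes the proof as described above.
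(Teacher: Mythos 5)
Your reduction coincides with the paper's: $M_1\times M_2$ is stratified totally real, each factor has bounded E-hulls by Theorem \ref{thm1}, and it suffices (via Theorem \ref{thm3}, equivalently the ``if'' direction of Theorem \ref{thm4}) to show that $M_1\times M_2$ has bounded E-hulls. The gap is precisely in that last step. Your projection estimate is correct as far as it goes: the hull of $K\cup(A\times B)$ is contained in $\widehat{\pi_1(K)\cup A}\times\widehat{\pi_2(K)\cup B}$. But running $A,B$ along exhaustions of $M_1,M_2$, this only places the hull of $K\cup(M_1\times M_2)$ inside a set of the form $\bigl(M_1\cup C_1\bigr)\times\bigl(M_2\cup C_2\bigr)$ with $C_1,C_2$ bounded (this is where bounded E-hulls of the factors enter), and that set contains the \emph{unbounded} mixed pieces $C_1\times M_2$ and $M_1\times C_2$, which are not contained in $M_1\times M_2$. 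So the bound does not yield bounded E-hulls, and your description of the mixed terms as ``a fixed compact hull times a bounded exhaustion level of the other factor'' misidentifies them: they are a bounded hull excess times the \emph{entire} unbounded other factor. No bookkeeping of constants along the exhaustion, and no appeal to Oka's product theorem, can eliminate them, because a point $(z_0,w_0)$ with $z_0\in h(\tilde K_1\cup M_1)$ and $w_0\in M_2$ far out cannot be separated from $K\cup(M_1\times M_2)$ by any function of $z$ alone (as $z_0$ lies in the relevant one-variable hull) nor by any function of $w$ alone (as $w_0\in M_2$); genuinely mixed functions are unavoidable.

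This is exactly where the paper does its real work: setting $K_j:=\tilde K_j\cup\overline{h(\tilde K_j\cup M_j)}$, it proves that $(K_1\times K_2)\cup(M_1\times M_2)$ is polynomially convex by separating an arbitrary outside point $(z_0,w_0)$, and in the mixed cases ($z_0\in M_1\setminus K_1$ with $w_0\in K_2\setminus M_2$, and the symmetric one) the separating function is $h(z,w)=f(w)\cdot g(z)$, where $f\in\O(\C^{n_2})$ comes from Lemma \ref{lemma:approx} (Carleman approximation on $M_2$ with interpolation at the single point $w_0$: $f(w_0)=1$, $|f|<1/2$ on $M_2$), and $g\in\O(\C^{n_1})$ comes from Theorem \ref{thm3} with the quantitative control $g(z_0)=1$, $|g|<3/2$ on $M_1$ and $\|g\|_{K_1}<1/(2N)$, $N=\|f\|_{K_2}$. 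The multiplicative balancing of these sup-norm bounds is what keeps $|h|\le 3/4$ on $(K_1\times K_2)\cup(M_1\times M_2)$ while $h(z_0,w_0)=1$. In short, the Carleman property of the factors must be invoked a second time, through approximation with interpolation, and not merely through the statement that they have bounded E-hulls; your proposal as written lacks this mechanism, and it is the crux of the theorem.
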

(For the definition of a stratified totally real set see Section 3)

\medskip

The precise question is more general: \emph{if 
$M_j$ are Carleman continua in $\mathbb C^{n_j}$ for $j=1,2,$ is $M_1\times M_2$ Carleman?} \

\medskip

A natural generalization of one of the main results of \cite{ManOvrWol:2011} which 
we will use in the proof of Theorem \ref{thm2} is the following:

\begin{theorem}\label{thm3}
Let $M\subset\mathbb C^n$ be a stratified totally real set which has bounded E-hulls, 
and let $K\subset\mathbb C^n$ be a compact set such that $K\cup M$
is polynomially convex.   Then any $f\in\mathcal C(K\cup M)\cap\mathcal O(K)$
is approximable in the Whitney $\mathcal C^0$-topolgy by entire functions. 
\end{theorem}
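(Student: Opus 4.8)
The plan combines a reduction to a compact approximation statement --- carried out with the bounded E-hulls hypothesis exactly as in the totally real case treated in \cite{ManOvrWol:2011} --- with a stratified version of the peeling argument of O'Farrell, Preskenis and Walsh. For the reduction: using that $M$ has bounded E-hulls and that $K\cup M$ is polynomially convex, one exhausts $\C^n$ by polynomially convex compacta $L_1\subseteq L_2\subseteq\cdots$ with $K\subseteq L_1$ and $\overline{\B}_j\subseteq L_j$; the bounded E-hulls hypothesis is precisely what keeps these hulls from running off to infinity, and polynomial convexity of $K\cup M$ forces $L_j$ to meet $M$ in a \emph{compact} stratified totally real set $N_j:=L_j\cap M$. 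One then builds the desired entire function as a locally uniform limit of polynomials obtained, on the pieces $L_j$, by a telescoping construction with Oka--Weil and rapidly shrinking errors --- the limit existing on all of $\C^n$ and staying Whitney-$\mathcal C^0$-close to $f$ on $K\cup M$. This reduces the theorem to the following compact statement:

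$(\ast)$ \emph{if $K'\subset\C^n$ is compact, $N\subset\C^n$ is a compact stratified totally real set, and $K'\cup N$ is polynomially convex, then every $g\in\mathcal C(K'\cup N)\cap\O(K')$ is a uniform limit on $K'\cup N$ of polynomials.}

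I would prove $(\ast)$ by induction on the number of strata of $N$, peeling off one stratum at a time. Write $N=S_d\sqcup\cdots\sqcup S_0$ with $\Sigma_k:=S_k\sqcup\cdots\sqcup S_0$ closed and $S_k=\Sigma_k\setminus\Sigma_{k-1}$ a totally real $\mathcal C^1$-submanifold of the open set $\C^n\setminus\Sigma_{k-1}$. The preliminary point is that $K'\cup\Sigma_k$ is polynomially convex for every $k$ (in particular $K'$ itself): its polynomial hull lies in $\widehat{K'\cup N}=K'\cup N$, so any extra point of the hull would lie in $N\setminus\Sigma_k=\bigcup_{j>k}S_j$; but near such a point the hull is contained in a stratified totally real set, and the local structure of such sets --- local polynomial convexity and the existence of local holomorphic peak functions, including at points of the lower strata --- forbids a polynomial hull from reaching a point of a higher stratum not already contained in $K'\cup\Sigma_k$. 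Granting this, apply the O'Farrell--Preskenis--Walsh theorem to the polynomially convex compactum $X=K'\cup N$ with the closed set $E=K'\cup\Sigma_{d-1}$: since $X\setminus E=S_d\setminus K'$ is a totally real $\mathcal C^1$-submanifold of the open set $\C^n\setminus E$, a function $g\in\mathcal C(X)$ is a uniform limit of polynomials on $X$ if and only if $g|_E$ is one on $E$. Since $E=K'\cup\Sigma_{d-1}$ is polynomially convex, $\Sigma_{d-1}$ is a compact stratified totally real set with one fewer stratum, and $g|_E\in\mathcal C(E)\cap\O(K')$, the induction hypothesis gives that $g|_E$ is a uniform limit of polynomials on $E$, whence the same holds for $g$ on $X$. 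The induction bottoms out at the assertion that $g|_{K'}$ is a uniform limit of polynomials on $K'$, which holds by Oka--Weil since $g\in\O(K')$ and $K'$ is polynomially convex.

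The step I expect to be the main obstacle is the polynomial-convexity bookkeeping, which has to be in force at every stage of both inductions: one needs the exhausting compacta $L_j$ polynomially convex with $L_j\cap M$ a compact stratified totally real set, and one needs every $K'\cup\Sigma_k$ polynomially convex so that the O'Farrell--Preskenis--Walsh theorem applies at each peeling. Both ultimately rest on the bounded E-hulls hypothesis together with the fact that a polynomial hull cannot spread into a stratified totally real set; the delicate point in the latter is that the higher strata $S_j$ are not closed in $\C^n$ but only in $\C^n\setminus\Sigma_{j-1}$, so the argument must be localized and extracted from the local structure of stratified totally real sets near points of the lower strata. The remaining ingredients --- the telescoping Oka--Weil construction producing the entire limit, and the verification that $S_d\setminus K'$ is genuinely a totally real $\mathcal C^1$-submanifold of $\C^n\setminus E$ so that the peeling theorem is applicable --- are routine given the definitions and the results of \cite{ManOvrWol:2011}.
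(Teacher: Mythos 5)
Your plan has the same two--layer architecture as the paper's proof: an exhaustion--plus--induction scheme reducing the Whitney approximation to a compact statement, and then that compact statement for a stratified totally real set attached to a compact set on which the function is holomorphic. Your statement $(\ast)$ is exactly the paper's Lemma \ref{inductivestep}, which the paper does not reprove but quotes from Theorem 4.5 of \cite{SamuelssonWold2012} (with \cite{ManOvrWol:2011} supplying the bottom stratum $K\cup M_0$); you instead sketch a direct proof by peeling strata with the O'Farrell--Preskenis--Walsh theorem. That is a legitimate alternative, and close in spirit to how the cited result is obtained, but be aware that the two items you flag as ``the main obstacle'' are precisely the content of those citations rather than routine bookkeeping: the polynomial convexity of $K'\cup\Sigma_k$, and more generally the fact that polynomial hulls cannot spread into a stratified totally real set, is what \cite{SamuelssonWold2012} provides, and without it neither your peeling induction nor the choice of exhaustion gets off the ground.

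Two further cautions on points your sketch treats as routine. First, in the paper's definition the differences $M_j\setminus M_{j-1}$ are totally real \emph{sets}, not $\mathcal C^1$ submanifolds, so ``verifying that $S_d\setminus K'$ is a totally real $\mathcal C^1$--submanifold'' is not the right target; you need the version of O'Farrell--Preskenis--Walsh in which $X\setminus E$ is only locally contained in totally real manifolds. Second, in the reduction the polynomial convexity you actually need is not that of the compacta $L_j$ alone but of the unions $L_j\cup M$ (equivalently of the pairs $L_j\cup(M\cap L_{j+1})$ to which $(\ast)$ is applied), and ``telescoping with rapidly shrinking errors'' does not by itself give Whitney closeness on the noncompact part of $M$, where $\epsilon$ may decay to $0$: one must carry along a function $f_j\in\mathcal C(L_j\cup M)\cap\mathcal O(L_j)$ that stays $\epsilon/2$--close to $f$ on \emph{all} of $M$, obtained by gluing the new approximant to the old one with a cutoff outside $L_{j+1}$, as the paper does with $f_{j+1}=\chi\cdot g_j+(1-\chi)f_j$. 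With these repairs your argument goes through and amounts to the paper's proof with the Samuelsson--Wold input unpacked.
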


\medskip

As a corollary to Theorem \ref{thm4} we also obtain the following:
\begin{corollary}
Let $M\subset\mathbb C^n$ be a totally real manifold of class $\mathcal C^k$ and 
assume that $M$ admits Carleman approximation by entire functions.   Then 
$M$ admits $\mathcal C^k$-Carleman approximation by entire functions.  
\end{corollary}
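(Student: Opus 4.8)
The plan is to deduce the corollary by chaining Theorem~\ref{thm1} with the positive direction of the totally real theory of \cite{ManOvrWol:2011}. First observe that a totally real manifold $M$ of class $\mathcal C^k$ is in particular a closed stratified totally real set (consisting of a single stratum), so all the relevant machinery applies to it. Since by hypothesis $M$ admits (continuous) Carleman approximation by entire functions, Theorem~\ref{thm1} — equivalently, the nontrivial implication of Theorem~\ref{thm4} — shows that $M$ has bounded E-hulls. This is the only place where the given hypothesis is used.

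It then remains to upgrade from the existence of bounded E-hulls to \emph{$\mathcal C^k$}-Carleman approximation. Here I would invoke the theorem of \cite{ManOvrWol:2011} in its sharp form: for a totally real $\mathcal C^k$ submanifold of $\C^n$ with bounded E-hulls, every $f\in\mathcal C^k(M)$ is approximable by entire functions in the Whitney $\mathcal C^k$-topology. Combining this with the previous paragraph gives the corollary at once. If one prefers to keep the cited input at the $\mathcal C^0$ level, one argues locally instead: cover $M$ by charts in which it is a graph over $\R^n\subset\C^n$, use a standard local $\mathcal C^k$-approximation (convolution against a holomorphic kernel, or Weierstrass-type approximation) to replace $f$ by a $\mathcal C^k$-close function that is holomorphic near the relevant compacta, and then feed the result into Theorem~\ref{thm3} along an exhaustion $K_1\subset K_2\subset\cdots$ of $M$; the diagonal/telescoping construction that turns local approximation into global Carleman approximation is the same one used for the $\mathcal C^0$ statement, now carried out with $\mathcal C^k$-small corrections.

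The main obstacle is precisely this gain in regularity. The Carleman construction is an inductive correction scheme in which one successively adds entire functions that are small on larger and larger pieces of $M$, and one must verify that these corrections — obtained from solving $\dd$ with estimates — can be made small not merely in $\mathcal C^0$ but in $\mathcal C^k$. This is exactly where the manifold hypothesis, as opposed to that of a general stratified totally real set, is essential, since it is what makes the local $\mathcal C^k$-solvability of $\dd$ and the attendant patching available. Once the bound on E-hulls is in hand from Theorem~\ref{thm1}, the remaining work is the known totally-real technology of \cite{ManOvrWol:2011}, so I expect the proof to be short: a citation together with the observation that the two halves fit together, plus, if desired, the local-to-global argument sketched above.
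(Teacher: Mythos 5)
Your main argument is exactly the paper's proof: apply Theorem~\ref{thm1} to conclude that $M$ has bounded E-hulls, then cite the $\mathcal C^k$-Carleman approximation theorem of \cite{ManOvrWol:2011} for totally real $\mathcal C^k$ manifolds with bounded E-hulls. The alternative local-to-global sketch you append is unnecessary (and only loosely outlined), but the citation-based route you lead with is correct and coincides with the paper's own two-line proof.
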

\begin{proof}
By \cite{ManOvrWol:2011} this follows from the fact that $M$ has bounded E-hulls.  
\end{proof}

For a short survey on the topic of Carleman approximation, see \emph{e.g.} the introduction of 
\cite{ManOvrWol:2011}.

\section{Proof of Theorem \ref{thm1}}

\begin{definition}
Let $X\subset\mathbb C^n$ be a closed set.   We say that $X$ is a Carleman continuum, or that 
$X$ admits Carleman approximation by entire functions, if $\mathcal O(\mathbb C^n)$ is 
dense in $\mathcal C(M)$ in the Whitney $\mathcal C^0$-topology.  
\end{definition}

\begin{definition}
Let $X\subset\mathbb C^n$  be a closed subset.  Given a compact normal exhaustion $X_j$ of $X$ 
we define the polynomial hull of $X$, denoted by $\widehat X$, by $\widehat X:=\cup_j\widehat X_j$
(this is independent of the exhaustion).  We also set $h(X):=\widehat X\setminus X$.    
If $h(X)$ is empty we say that $X$ is polynomially convex.  
\end{definition}
\begin{definition}
We say that a  closed set $M\subset\mathbb C^n$ has bounded E-hulls if 
for any compact set $K\subset\mathbb C^n$ the set $h(K\cup M)$ is bounded.  
\end{definition}

We give two lemmas preparing for the proof of Theorem \ref{thm1}.
The first one is a simple well known result \'{a} la Mittag-Leffler and Weierstrass, which 
we state for the lack of a suitable reference. 

\begin{lemma}\label{lemma:f_l}
Let $E=\{x_j\}_{j\in\mathbb N}$ be a discrete sequence in $\C^n$. 
Then for any sequence $\{a_j\}_{j\in\mathbb N}\subset\mathbb C$
there exists an entire function $f\in\mathcal O(\mathbb C^n)$
with $f(x_j)=a_j$ for all $j\in\mathbb N$, and
there exist holomorphic functions $f_1,...,f_n$
such that $E=Z(f_1,...,f_n)$. 
\end{lemma}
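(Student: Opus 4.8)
The plan is to construct the two objects separately, since they are essentially independent.

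\medskip

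\textbf{The interpolating function $f$.} First I would build $f\in\mathcal O(\C^n)$ with $f(x_j)=a_j$. Since $E=\{x_j\}$ is discrete and closed in $\C^n$, one approach is to reduce to the one–variable Mittag-Leffler/Weierstrass theory by a generic linear projection: after a unitary change of coordinates we may assume the first coordinates $z_1(x_j)$ are pairwise distinct and that $|z_1(x_j)|\to\infty$, so that $\{z_1(x_j)\}$ is a discrete subset of $\C$; then take a classical entire function $g\in\mathcal O(\C)$ of one variable with $g(z_1(x_j))=a_j$ (standard Mittag-Leffler interpolation on a discrete set in $\C$), and set $f(z):=g(z_1)$. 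The only subtlety is arranging, by a small rotation of coordinates, that the first coordinates of the $x_j$ are distinct and go to infinity; this is a routine Baire-category/measure argument on the choice of projection direction, using that $E$ is countable and discrete. Alternatively, and perhaps more cleanly, one can invoke Cartan's Theorem B directly: the points $x_j$ form a closed $0$-dimensional analytic subset, the assignment $x_j\mapsto a_j$ is a section of the structure sheaf over this subvariety, and $H^1(\C^n,\mathcal J)=0$ for the corresponding ideal sheaf yields the holomorphic extension $f$.

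\medskip

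\textbf{The defining functions $f_1,\dots,f_n$.} For the second assertion I would again use either a projection argument or sheaf theory. Sheaf-theoretically: $E$ is a closed complex submanifold (of dimension $0$) of $\C^n$, hence the reduced ideal sheaf $\mathcal I_E$ is coherent, and by Forster–Ramspott (or a Cartan-theory count of sections, together with the fact that $E$ has codimension $n$) $\mathcal I_E$ is globally generated by $n$ sections $f_1,\dots,f_n\in\mathcal O(\C^n)$; since $E$ has codimension $n$, the common zero set of a generic such $n$-tuple is exactly $E$ as a set, i.e. $E=Z(f_1,\dots,f_n)$. A hands-on alternative: pick, as above, a projection $\pi_1(z)=z_1$ so that $\pi_1$ is injective on $E$ with discrete image, choose $h_1\in\mathcal O(\C)$ with $Z(h_1)=\pi_1(E)$ (Weierstrass), and set $f_1(z):=h_1(z_1)$. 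Now $Z(f_1)=\pi_1^{-1}(\pi_1(E))$ is a union of affine hyperplanes, each containing exactly one point of $E$; on each such hyperplane we must further cut down to the single point, which is handled by the remaining coordinates after another generic linear change, using a partition-of-unity/Oka gluing to patch the local choices into global $f_2,\dots,f_n$.

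\medskip

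\textbf{Main obstacle.} The interpolation of $f$ and the single-variable factor $f_1$ are genuinely classical; the only real work is producing \emph{exactly} $n$ global holomorphic functions whose common zero locus is precisely $E$ (not a larger set), which forces one either to quote the Forster–Ramspott bound on the number of generators of a coherent ideal sheaf on a Stein manifold, or to carry out an explicit inductive peeling of coordinates with an Oka–Cartan gluing at each stage. I expect the cleanest writeup just cites Cartan's Theorems A and B together with the Forster–Ramspott theorem on the minimal number of generators; the remaining verification that the generic common zero set has no extra components is immediate from the codimension count.
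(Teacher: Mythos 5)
Your argument has a genuine gap at its foundation: both your main route to the interpolating function and your ``hands-on'' route to $f_1$ rest on the claim that after a generic unitary change of coordinates the first coordinates $z_1(x_j)$ are pairwise distinct and tend to infinity, i.e.\ that some linear functional is injective and proper on $E$. This is false for general discrete sets in $\mathbb C^n$, $n\ge 2$, and no Baire/measure argument can rescue it. For a counterexample, choose radii $R_m\to\infty$ and for each $m$ a finite $(1/R_m)$-dense subset $V_m$ of the unit sphere, and let $E$ consist of the points $R_m v$, $v\in V_m$, $m\in\mathbb N$ (perturbed to be distinct). Each bounded set meets only finitely many of these finite blocks, so $E$ is closed and discrete; but for every unit linear functional $\ell$ the set $\ker\ell\cap S^{2n-1}$ is nonempty, so every block contains a point $v$ with $|\ell(v)|\le 1/R_m$, hence $|\ell(R_m v)|\le 1$. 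Thus for \emph{every} linear projection infinitely many points of $E$ land in a fixed disk, and the image can never be an injective sequence tending to infinity. This wildness of discrete sets is precisely why the paper does not project linearly: it invokes Rosay--Rudin (Theorem 3.7) to obtain an \emph{injective holomorphic} map $F=(\tilde f_1,\dots,\tilde f_n):\mathbb C^n\to\mathbb C^n$ with $F(x_j)=j\,\mathbf{e}_1$, and then pulls back one-variable solutions: $f=g\circ\tilde f_1$ with $g(j)=a_j$ for the interpolation, and $f_1=g\circ\tilde f_1$ (with $Z(g)=\mathbb N$), $f_j=\tilde f_j$ for $j\ge 2$, injectivity of $F$ guaranteeing $Z(f_1,\dots,f_n)=E$ exactly.

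Your Cartan Theorem B fallback does repair the interpolation half (the sequence $0\to\mathcal I_E\to\mathcal O\to\mathcal O/\mathcal I_E\to 0$ together with $H^1(\mathbb C^n,\mathcal I_E)=0$ gives surjectivity of restriction to $E$), so the first assertion survives by that route. The second assertion, however, is not established in your proposal: the appeal to Forster--Ramspott for ``$\mathcal I_E$ is globally generated by $n$ sections'' is quoted without a precise statement, and it is not evident that the generation bounds in that circle of results yield exactly $n$ here; moreover, the claim that a \emph{generic} $n$-tuple of sections has common zero set precisely $E$ (no extra components) is an assertion about infinitely many conditions on a noncompact manifold and needs an actual argument (e.g.\ Baire category over an exhaustion with control of the extra zero locus), not just the pointwise codimension count. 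The ``peel off coordinates and glue by Oka theory'' alternative both depends on the nonexistent linear projection and is left unspecified at the crucial gluing step. Since the second assertion is the genuinely nontrivial content of the lemma, you should either adopt the Rosay--Rudin composition trick of the paper or cite, with a precise statement, a theorem asserting that a $0$-dimensional analytic subset of $\mathbb C^n$ is the common zero set of $n$ entire functions.
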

\begin{proof}
By Theorem 3.7 in \cite{RosayRudin1988} there exists an injective 
holomorphic map $F=(\tilde f_1,...,\tilde f_n):\mathbb C^n\rightarrow\mathbb C^n$ such 
that $F(x_j)=j\cdot{\bf e_1}$.   For the first claim we let $g\in\mathcal O(\mathbb C)$ be an entire 
function with $g(j)=a_j$ for all $j\in\mathbb N$ and set $f=g\circ \tilde f_1$.  
For the second claim let  $g\in\mathcal O(\mathbb C)$ be an entire 
function whose zero set is precisely $\{j\}_{j\in\mathbb N}$.  
Now set $f_1=g\circ\tilde f_1$ and $f_j=\tilde f_j$ for $j=2,...,n$.
\end{proof}

\begin{lemma}\label{lemma:approx}
 Let $M\subset\mathbb C^n$ be a Carleman continuum 
 and let 
 $E=\{x_j\}_{j\in\mathbb N}$ be a discrete set of points with $E\subset\mathbb C^n\setminus M$.  
Then $M\cup E$ is a Carleman continuum, with interpolation on $E$.  
 \end{lemma}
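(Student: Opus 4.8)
The plan is to combine a Carleman approximation on $M$ with an interpolation result on the discrete set $E$, then glue the two using a partition-of-unity/Cartan-type argument. Since $M\cup E$ is closed (as $E$ is discrete and disjoint from the closed set $M$), and we want density of $\O(\C^n)$ in $\mathcal C(M\cup E)$ in the Whitney topology, it suffices to fix $h\in\mathcal C(M\cup E)$ and a positive continuous error function $\epsilon$ on $M\cup E$, and produce an entire function within $\epsilon$ of $h$ on $M\cup E$ (with equality on $E$ for the interpolation claim).

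First I would choose, using Lemma \ref{lemma:f_l}, holomorphic functions $f_1,\dots,f_n$ with $E=Z(f_1,\dots,f_n)$, and an entire $F$ with $F(x_j)=h(x_j)$ for all $j$. Then $h-F$ restricted to $M$ is a continuous function; apply the Carleman property of $M$ to approximate $h-F$ on $M$ by an entire $G$, with an error tolerance that I would take much smaller than $\epsilon$ on a neighborhood basis — but the subtlety is that $M$ may accumulate on points of $E$ or escape to infinity near $E$, so the tolerance must be chosen as a continuous function on $M$ that is forced to be small near each $x_j$. Concretely, I would build a continuous function $\delta$ on $M$ with $\delta\le\epsilon/2$ and $\delta(x)\le |f(x)|$ small near each $x_j$, where $f$ is a fixed entire function vanishing exactly on $E$ (again from Lemma \ref{lemma:f_l}); then approximate $h-F$ on $M$ within $\delta$ by entire $G$.

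The output is then $F + f\cdot Q$ where $Q$ is chosen so that $fQ\approx G$ on $M$; to arrange this I would instead run the approximation on $M$ for the function $(h-F)/f$ — but since $f$ may vanish on $M$ this does not quite work, so the cleaner route is: approximate $h-F$ on $M$ by $G$ with $|G-(h-F)|<\delta$ on $M$, note $G$ must then be small wherever $f$ is small on $M$, and write $G$ near $E$ in terms of $f_1,\dots,f_n$ using a Cartan-type division (on $\C^n$, a continuous function vanishing to first order along $Z(f_1,\dots,f_n)$ need not divide, so instead I would use that $G$ is merely small, not zero, near $E$). The final entire function is $\Phi := F + G$: on $E$ we have $G(x_j)$ close to $0$ by the choice of $\delta$, so $\Phi(x_j)\approx h(x_j)$; correcting to exact interpolation on $E$ is done by subtracting an entire function interpolating the (small) residuals $\Phi(x_j)-h(x_j)$ on the discrete set $E$, whose sup on any compact set can be made as small as we like by Lemma \ref{lemma:f_l} applied to a rapidly decaying target sequence. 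On $M$ we have $|\Phi-h|=|G-(h-F)|<\delta\le\epsilon/2<\epsilon$.

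The main obstacle is controlling the interaction between $M$ and $E$ near infinity and near accumulation: the Whitney topology demands uniform-on-compacts control with a possibly shrinking error as $|z|\to\infty$, and $E$ being discrete in $\C^n\setminus M$ still allows points of $E$ to be arbitrarily close to $M$. The fix is the careful choice of the tolerance function $\delta$ on $M$ as above — it must dominate neither $\epsilon/2$ nor the "distance to $E$" budget — together with the fact that the correction entire function for exact interpolation on $E$ can be taken uniformly small on every compact set by choosing its interpolation data to decay fast enough (faster than any prescribed positive function on $E$), which is exactly the flexibility Lemma \ref{lemma:f_l} grants.
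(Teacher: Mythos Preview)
Your proposal has a genuine gap. In your ``cleaner route'' you approximate $h-F$ on $M$ by an entire $G$ within a tolerance $\delta\in\mathcal C(M)$, and then assert that $G(x_j)$ is close to $0$ ``by the choice of $\delta$''. But $\delta$ lives only on $M$, and each $x_j$ lies at positive distance from the closed set $M$; making $\delta$ tiny on the part of $M$ near $x_j$ imposes no constraint whatsoever on the value $G(x_j)$. An entire function can be uniformly as small as you like on $M$ and simultaneously as large as you like at any prescribed point off $M$. Hence the residuals $\Phi(x_j)-h(x_j)=G(x_j)$ are completely uncontrolled, and your correction step collapses. (Even if the residuals \emph{were} small, Lemma~\ref{lemma:f_l} gives only existence of an interpolant, with no bound on compacta; a ``rapidly decaying target sequence'' does not by itself force the interpolating entire function to be small.)

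The approach you abandoned is in fact the right one, once repaired. You rejected dividing by a single entire $f$ with $Z(f)\supset E$ because $f$ might vanish on $M$; for $n\ge 2$ this is a genuine obstruction, since $Z(f)$ is then a hypersurface rather than the discrete set $E$. The paper's remedy is to use all $n$ functions $f_1,\dots,f_n$ from Lemma~\ref{lemma:f_l} with $Z(f_1,\dots,f_n)=E$: since this common zero set misses $M$, there are continuous $g_j$ on $M$ with $\sum_j g_jf_j\equiv 1$; Carleman-approximating the $g_j$ by entire $\tilde g_j$ produces an entire $\phi=\sum_j\tilde g_jf_j$ that vanishes on $E$ and is nowhere zero on $M$. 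Now the division you wanted is legitimate: Carleman-approximate $(F-h)/\phi\in\mathcal C(M)$ by an entire $\sigma$, and take $F-\phi\,\sigma$. On $E$ this equals $F=h$ exactly, so interpolation comes for free with no correction step, and on $M$ it is as close as desired to $F-\phi\cdot\frac{F-h}{\phi}=h$.
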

\begin{proof}
Assume that $q:M \cup E \to \C$ and $\epsilon: M \cup E \to \R_+$
are continuous functions.
By Lemma \ref{lemma:f_l} there exist functions $f_1,..,f_n\in\mathcal O(\mathbb C^n)$
such that $f_j(x_k)=0$ for all $x_k\in E$ and $j=1,...,n$, and such 
that $Z(f_1,...,f_n)\cap M=\emptyset$.  So there exist continuous functions 
$g_j\in\mathcal C(M)$ such that 
$$
g_1\cdot f_1+\cdot\cdot\cdot+g_n\cdot f_n=1
$$
on $M$.  
Since $M$ admits Carleman approximation we may approximate 
the $g_j$'s by entire functions $\tilde g_j\in\mathcal O(\mathbb C^n)$
such that the function 
$$
\phi=\tilde g_1\cdot f_1+\cdot\cdot\cdot+\tilde g_n\cdot f_n
$$
satisfies $\phi(x)\neq 0$ for all $x\in M$.   Obviously $\phi(z)=0$
for all $z\in E$. \

By the Mittag-Leffler Theorem there exists an entire function $h\in\mathcal O(\mathbb C^n)$
such that $h(z)=q(z)$ for all $z\in E$.  Let $\psi\in\mathcal C(M)$ be 
the function $\psi(x):=\frac{h(x)-q(x)}{\phi(x)}$.    Since $M$ admits 
Carleman approximation we may approximate $\psi$ by en entire function 
$\sigma\in\mathcal O(\mathbb C^n)$, and if the approximation 
is good enough, the function $h(z)-\phi(z)\cdot\sigma(z)$ is 
$\epsilon$-close to $q$ on $M\cup E$.  
\end{proof}

\emph{Proof of Theorem \ref{thm1}:}
Aiming for a contradiction we assume that 
$M$ does not have bounded exhaustion hulls, \emph{i.e.}, there exists a compact set $K$
such that $h(K\cup M)$ is not bounded.  
This implies that there is a discrete sequence of points 
$E=\{x_j\} \subset h(K\cup M)$. 
By Lemma \ref{lemma:approx} there exists an entire function  $q\in\mathcal O(\mathbb C^n)$ such that 
$|q(z)| < \frac 12$, for $z\in M$ and $q(x_j)=j$ for $x_j\in E$. 
Define $C=\|q\|_K$.  For $j>C$ we then have that $|q(j)|>\underset{z\in K\cup M}{\sup}\{|q(z)|\}$ 
which contradicts the assumption that $E\subset h(K\cup M)$.

\section{Proof of Theorem \ref{thm3}}

\begin{definition}
Let $M\subset\mathbb C^n$ be a closed set.  We say that $M$ is a stratified totally 
real set if $M$ is the increasing union $M_0\subset M_2\subset\cdot\cdot\cdot\subset M_N=M$
of closed sets, such that $M_j\setminus M_{j-1}$ is a totally real set for $j=1,...,N$, and 
with $M_0$ totally real.
\end{definition}

The proof of Theorem \ref{thm3} is an inductive construction depending on the following lemma (\cite{SamuelssonWold2012}, Theorem 4.5)
\begin{lemma}\label{inductivestep}
Let $K\subset\mathbb C^n$ be a compact set, let $M\subset\mathbb C^n$ be a compact stratified 
totally real set, and assume that $K\cup M$ is polynomially convex.  Then 
any function $f\in\mathcal C(K\cup M)\cap\mathcal O(K)$ is uniformly approximable 
by entire functions.  
\end{lemma}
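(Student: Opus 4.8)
The plan is to reduce the statement to approximation by functions holomorphic near $K\cup M$, and then to remove the strata one at a time, from the bottom up, by an almost-holomorphic extension over each top totally real stratum followed by a $\dd$-correction. First I would record the reduction: since $K\cup M$ is compact and polynomially convex it has a neighborhood basis of Runge domains, so by the Oka--Weil theorem every function holomorphic in a neighborhood of $K\cup M$ is a uniform limit on $K\cup M$ of entire functions. It therefore suffices to approximate $f$ uniformly on $K\cup M$ by functions holomorphic in a neighborhood of $K\cup M$. I would prove, by induction on $k=0,\dots,N$, the statement $H(k)$: \emph{for every $\epsilon>0$ there is a function continuous on $K\cup M$, holomorphic in a neighborhood of $K\cup M_k$, and $\epsilon$-close to $f$ on $K\cup M$.} The hypothesis $f\in\O(K)$ is the case $k=-1$ (with $M_{-1}=\emptyset$), and $H(N)$ together with the Oka--Weil reduction finishes the proof.

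For the inductive step $H(k-1)\Rightarrow H(k)$, replace $f$ by the approximant supplied by $H(k-1)$, so that we may assume $f$ is holomorphic in a neighborhood $W$ of $K\cup M_{k-1}$. The new stratum $S_k:=M_k\setminus M_{k-1}$ is totally real and relatively open in $M_k$, with $\overline{S_k}\setminus S_k\subset M_{k-1}$. Fix a smooth cutoff $\chi$ equal to $1$ on a neighborhood of $K\cup M_{k-1}$ inside $W$; then $\{\chi<1\}$ stays at positive distance from $M_{k-1}$, hence meets $M_k$ in a compact subset of the totally real set $S_k$. There I would approximate $f|_{S_k}$ by a smooth function and take an almost-holomorphic extension $G$ (an extension with $\dd G$ vanishing to high order on $S_k$, as in the H\"ormander--Wermer/Baouendi--Treves construction), and set $F:=\chi f+(1-\chi)G$ on a neighborhood $\Omega_k$ of $K\cup M_k$. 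By construction $F$ is smooth, $F=f$ (holomorphic) where $\chi\equiv1$, and $F$ is uniformly close to $f$ on $K\cup M$, including at points of the higher strata lying near $S_k$, by continuity of $f$.

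The heart of the step is the estimate on $\dd F=\dd\chi\wedge(f-G)+(1-\chi)\,\dd G$. Near $M_{k-1}$ the first term vanishes because $f$ is holomorphic and $\chi\equiv1$; on the transition set $\{0<\chi<1\}$, which lies in the good part of $S_k$, both $f$ and $G$ are close to $f$, so choosing the extension error much smaller than $\|\dd\chi\|^{-1}$ makes $\dd\chi\wedge(f-G)$ small; and $(1-\chi)\,\dd G$ is small by the flatness of $\dd G$ on $S_k$. Thus $\dd F$ is $\dd$-closed and uniformly small on a neighborhood of $K\cup M_k$. Solving $\dd u=\dd F$ there with a sup-norm estimate $\|u\|\le C\|\dd F\|$ yields $u$ small and $g:=F-u$ holomorphic in a neighborhood of $K\cup M_k$ with $g$ close to $f$ on $K\cup M$; patching $g$ with $f$ by a continuous cutoff supported in $\Omega_k$ then produces the function required by $H(k)$.

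The main obstacle is exactly this $\dd$-solve: one must arrange a neighborhood of the possibly non-polynomially-convex set $K\cup M_k$ on which $\dd$ is solvable with uniform sup-norm bounds, and one must control the almost-holomorphic extension of merely continuous data uniformly up to the interface with $M_{k-1}$. This is where total reality (to make $\dd G$ flat) and the inductively gained holomorphicity near $M_{k-1}$ (to kill the interface term) are both essential, and where integral-kernel estimates on strictly pseudoconvex neighborhoods enter; the polynomial convexity of $K\cup M$ is used both to feed Oka--Weil at the end and to supply the good ambient neighborhoods.
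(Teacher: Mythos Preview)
The paper's own proof is a two-line citation: it notes that $K\cup M_0$ is polynomially convex (from the proof of Theorem~4.5 in \cite{SamuelssonWold2012}) and then invokes that theorem together with \cite{ManOvrWol:2011} for the base case. Your outline is, in effect, a sketch of the argument that lies behind the cited theorem, so the overall strategy is reasonable; the question is whether the sketch actually closes.

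It does not, and the gap is exactly where you place ``the main obstacle''. First, you treat $K\cup M_k$ as ``possibly non-polynomially-convex'', but in fact each $K\cup M_k$ \emph{is} polynomially convex: any point of $\widehat{K\cup M_{k}}\setminus(K\cup M_{k})$ would lie in $\widehat{K\cup M_{k+1}}=K\cup M_{k+1}$, hence in the totally real stratum $S_{k+1}$, and a local strictly plurisubharmonic peak function on $S_{k+1}$ then contradicts Rossi's local maximum principle; descending from $k=N$ gives the claim for all $k$. This is the decisive geometric input (and precisely what the paper isolates for $k=0$), and without it you have no Stein neighborhood basis on which to run the $\dd$-argument at all. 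Second, even granting polynomial convexity, your sup-norm scheme $\|u\|\le C\|\dd F\|$ on a fixed neighborhood does not balance: $(1-\chi)\,\dd G$ is small only within distance $\delta$ of $S_k$, with $|\dd G|$ controlled by $\|\tilde f\|_{\mathcal C^1}\cdot\delta$, while driving the smooth approximant $\tilde f$ toward the merely continuous $f$ forces $\|\tilde f\|_{\mathcal C^1}\to\infty$, and the constant $C$ depends on the chosen domain as well. The actual mechanism (as in H\"ormander--Wermer and in \cite{SamuelssonWold2012}) is a weighted $L^2$ solution on a shrinking family of Stein tubes, with the shrinking rate tied to the modulus of continuity of $f$; that balancing step is what is missing from your proposal.
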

\begin{proof}
Set $X_j:=K\cup M_j$ for $j=0,...,N$.   Then $X_0$ is polynomially convex (see the proof of Theorem 4.5 in \cite{SamuelssonWold2012})
and so it follows from \cite{ManOvrWol:2011} that $f|_{X_0}$ is uniformly approximable by entire 
functions.  The result is now immediate from Theorem 4.5 in \cite{SamuelssonWold2012}.
\end{proof}

\emph{Proof of Theorem \ref{thm3}:} Choose a normal exhaustion 
$K_j$ of $\mathbb C^n$ such that $K_j\cup M$ is polynomially convex for 
each $j$.   We choose $K_1=K$.  Assume that we are given $f\in\mathcal C(K\cup M)\cap\mathcal O(K)$ and 
$\epsilon\in\mathcal C(K\cup M)$ with $\epsilon(x)>0$ for all $x\in M$.   We will 
construct an approximation of $f$ by induction on $j$, and so 
we assume that we have constructed $f_j\in\mathcal C(K_j\cup M)\cap\mathcal O(K_j)$
such that $|f_j(x) - f(x)|<\epsilon(x)/2$ for all $x\in M$.
Choose $\chi\in\mathcal C^\infty_0(K_{j+2}^\circ)$ with $\chi\equiv 1$ near $K_{j+1}$.  
Given any $\delta_j>0$ it follows from Lemma \ref{inductivestep} that 
there exists an entire function $g_j$ such that $|g_j(x)-f_j(x)|<\delta_j$ 
for all $x\in K_j\cup (M\cap K_{j+1})$.   We set $f_{j+1}:=g_j$ on $K_{j+1}$
and $f_{j+1}:=\chi\cdot g_j + (1-\chi)(f_j)$ on $M\setminus K_{j+1}$.  
It is clear that we get  
\begin{itemize}
\item[($j_1$)] $\|f_{j+1}-f_j\|_{K_j}<\delta_j$, 
\end{itemize}
and we have on $M_{j+1}\setminus M_{j}$ that $f_{j+1}-f=(f_j-f)+\chi\cdot(g_j - f_j)$, so
if $\delta_j$ is sufficiently small we also get that 
\begin{itemize}
\item[($j_2$)] $|f_{j+1}(x)-f(x)|<\epsilon(x)/2$ for all $x\in M$.
\end{itemize}
If $\delta_j$ decreases fast enough it follows from $j_1$ that 
$f_j$ converges to an entire function $\tilde f$, and it 
follows from $j_2$ that $|\tilde f(x)-f(x)|<\epsilon(x)$ for 
all $x\in M$.

\section{Proof of Theorem \ref{thm2}}

Note first that $M_1\times M_2$ is a stratified totally real set, so by 
Theorem \ref{thm3} it suffices to show that $M_1\times M_2$
has bounded E-hulls.  
Let $K\subset\mathbb C^{n_1}\times\mathbb C^{n_2}$ be compact.   Since 
both $M_1$ and $M_2$ are Carleman continua, they have bounded 
exhaustion hulls by Theorem \ref{thm1}.     Choose compact 
sets $\tilde K_j$ in $\mathbb C^{n_j}$ for $j=1,2,$ with $K\subset \tilde K_1\times \tilde K_2$.
Set $K_j:=\tilde K_j\cup \overline{h(\tilde K_j\cup M)}$ which are now compact sets.  

We claim that $(K_1\times K_2)\cup (M_1\times M_2)$ is polynomially convex, from which 
it follows that $h(K\cup (M_1\times M_2))\subset K_1\times K_2$.    Let $(z_0,w_0)\in (\mathbb C^{n_1}\times\mathbb C^{n_2})\setminus [ (K_1\times K_2)\cup (M_1\times M_2)]$.  We consider several cases.
\begin{itemize}
\item[i)] $z_0\notin K_1\cup M_1$.  Here we simply use a function in the variable $z_0$ only.
\item[ii)] $w_0\notin K_2\cup M_2$.  Analogous to i). 
\item[iii)] $z_0\in M_1\cap K_1$.  Then $w_0\notin K_2\cup M_2$, so we are in case ii).
\item[iv)] $z_0\in M_1\setminus K_1$.  Then $w_0\in K_2\setminus M_2$, unless we are in case ii). 
By Lemma \ref{lemma:approx}  there exists $f\in\mathcal O(\mathbb C^{n_2})$ such that 
$f(w_0)=1$ and $|f(w)|<1/2$ for all $w\in M_2$.   Set $N=\|f\|_{K_2}$.  By Lemma \ref{thm3}
there exists $g\in\mathcal O(\mathbb C^{n_1})$ such that $\|g\|_{K_1}<1/(2N), |g(z)|<3/2$ for all 
$z\in M_1$ and $g(z_0)=1$.  Set $h(z,w)=f(w)\cdot g(z)$.   Then $h(z_0,w_0)=1$.   
For $(z,w)\in K_1\times K_2$ we have $|h(z,w)|=|f(w)||g(z)|\leq N\cdot 1/(2N)=1/2$.
If $(z,w)\in M_1\times M_2$ then $|h(z,w)|=|f(w)||g(z)|\leq 1/2\cdot 3/2=3/4$.  
\item[v)] $z_0\in K_1\setminus M_1$.   Then $w_0\in M_2\setminus K_2$ unless we are in case ii), but 
this is the same as iv) with the roles of $z_0$ and $w_0$ switched.  
\end{itemize}
$\hfill\square$


\begin{thebibliography}{}

\bibitem{ManOvrWol:2011}
{\sc P.~E. Manne, E.~F. Wold, and N.~Ovrelid}, {\em Holomorphic convexity and
  {C}arleman approximation by entire functions on {S}tein manifolds}, Math.
  Ann., 351 (2011), pp.~571--585.

\bibitem{RosayRudin1988}
{\sc J.P. Rosay and  W. Rudin}, 
{\em Holomorphic maps from  $\mathbb C^n$ to $\mathbb C^n$.} Trans. Amer. Math. Soc.,  310 (1988), no. 1, 47--86.

\bibitem{SamuelssonWold2012}
{\sc H. Samuelsson and E. F. Wold},{\em Uniform algebras and approximation on manifolds.}
Invent. Math. 188 (2012), no. 3, 505--523.

\end{thebibliography}
\end{document}